\newcommand {\be}{\begin{equation}}
\newcommand {\ee}{\end{equation}}
\newcommand {\beW}{\begin{equation*}}
\newcommand {\eeW}{\end{equation*}}
\newcommand {\bsp}{\begin{split}}
\newcommand {\esp}{\end{split}}
\newcommand {\bea}{\begin{eqnarray}}
\newcommand {\eea}{\end{eqnarray}}
\newcommand {\beaW}{\begin{eqnarray*}}
\newcommand {\eeaW}{\end{eqnarray*}}
\newcommand {\bM}{\begin{bmatrix}}
\newcommand {\eM}{\end{bmatrix}}
\newcommand {\bi}{\begin{itemize}}
\newcommand {\ei}{\end{itemize}}
\newcommand {\ben}{\begin{enumerate}}
\newcommand {\een}{\end{enumerate}}
\newcommand {\bal}{\begin{align}}
\newcommand {\eal}{\end{align}}
\newcommand {\balW}{\begin{align*}}
\newcommand {\ealW}{\end{align*}}
\newcommand {\bcm}{\begin{columns}}
\newcommand {\ecm}{\end{columns}}
\newcommand {\bc}{\begin{column}}
\newcommand {\ec}{\end{column}}
\newcommand {\eps}{\varepsilon}
\newcommand{\m}{\mathcal}
\newtheorem{thm}{Theorem}
\newtheorem{prop}{Proposition}
\newtheorem{pf}{Proof}
\newtheorem{lemma}{Lemma}
\newtheorem{assumption}{Assumption}
\DeclareMathOperator{\dx}{dx}
\DeclareMathOperator{\Xs}{\mathcal{X}}
\DeclareMathOperator{\F}{\mathcal{F}}
\DeclareMathOperator{\XUs}{\mathcal{X}\times\mathcal{U}}
\DeclareMathOperator{\Us}{\mathcal{U}}
\DeclareMathOperator{\Prb}{\mathbb{P}}
\DeclareMathOperator{\erf}{erf}
\DeclareMathOperator{\st}{subject\ to}
\DeclarePairedDelimiter\parens{\lparen}{\rparen}
\DeclarePairedDelimiter\bracks{\{}{\}}
\DeclarePairedDelimiter\sqbracks{[}{]}
\begin{document}

\title{The Linear Programming Approach to  Reach-Avoid Problems for Markov Decision Processes}

\author{Nikolaos Kariotoglou, Maryam Kamgarpour, Tyler Summers, and John Lygeros}
\maketitle

\begin{abstract}
One of the most fundamental problems  in Markov decision processes is analysis and control synthesis for safety and reachability specifications. We consider the stochastic reach-avoid problem, in which the objective is to synthesize a control policy to maximize the probability of reaching a target set at a given time, while staying in a safe set at all prior times.  We characterize  the solution to this problem through an infinite dimensional linear program. We then develop a tractable approximation to the infinite dimensional linear program through finite dimensional approximations of the decision space and constraints. For a large class of Markov decision processes modeled by Gaussian mixtures kernels we show that through a proper selection of the finite dimensional space, one can further reduce the computational complexity of the resulting linear program. We validate the proposed method and analyze its potential with a series of numerical case studies.   %
\end{abstract}

\section{Introduction}
\label{sec:intro}
A wide range of controlled dynamical systems can be modeled using the framework of Markov decision processes (MDPs) \cite{feinberg2002handbook,puterman1994markov}. Depending on the problem at hand, several objectives can be formulated for an MDP including maximization of a reward function or satisfaction of a specification defined by a formal language. Safety and reachability are two of the most fundamental specifications for a dynamical system.  In a reach-avoid problem for an MDP, the objective is to maximize the probability of reaching a target set within a given time horizon while staying in a safe set \cite{abate2008probabilistic}. This objective is stage-wise \emph{sum-multiplicative}, in contrast with the stage-wise additive cost functions typically used in MDPs. This addresses a recognized limitation of additive cost functions: many tasks are not easily encoded by an additive cost function and are more naturally posed in terms of reaching and avoiding certain sets. This difficulty is evidenced by the problem of inverse reinforcement learning \cite{ng2000algorithms,abbeel2004apprenticeship,abbeel2011inverse}, a well-known problem in artificial intelligence where the objective is to \emph{learn} a reward or cost function being optimized based on observed behavior of an agent/controller,  in tasks where it is not entirely obvious what should be optimized. The stochastic reach-avoid framework has been applied to several problems including aircraft conflict detection under stochastic wind \cite{watkins2003stochastic,ding2013stochastic}, feedback control of camera networks in the presence of an uncertain evader \cite{kariotoglou2014camaeras} and optimal feedback policies for building evacuation under a randomly evolving hazards \cite{wood2013stochastic}. 

The reach-avoid problem considered in this paper is closely related to the stochastic shortest path problem \cite{bertsekas1991analysis}. In contrast to stochastic shortest path, however, there is no cost function for transitioning from one state (often treated as a graph node) to another. As pointed out in \cite{kolobov2011heuristic,kolobov12mausam}, this difference makes the dynamic programming algorithm developed for stochastic shortest path to fail for the reach-avoid problem and certain problem instances with more general reward structures, than the mostly studied additive reward functions. Hence, the authors in \cite{kolobov2011heuristic,kolobov12mausam} propose the so-called generalized stochastic shortest path framework that can address a wide range of stage cost structures. It can, in particular, address the problem of maximizing the probability to reach a goal set. Our problem, though has very similar objective, is not formulated in the category of MDPs considered in \cite{kolobov2011heuristic,kolobov12mausam} due to (a) the continuous state-space and (b) a fixed finite horizon to reach the target set. Both of these considerations are motivated by engineering applications in which one has limited time to achieve an objective and furthermore, modeling the continuous dynamical system with finite state space would be prohibitive due to explosion of number of states.

The dynamic programming (DP) principle characterizes the solution to the stochastic reach-avoid problem  with continuous state and action spaces \cite{prandini2006stochastic}. However, it is intractable to find the reach-avoid value function through the DP equations. One  can  approximate the DP equations on a finite grid defined over the MDP state and action spaces. Gridding techniques are theoretically attractive since they can provide explicit error bounds for the approximation of the value function under general Lipschitz continuity assumptions \cite{abate2007computational,kushner2001numerical}. In practice, the complexity of gridding based techniques suffer from the infamous Curse of Dimensionality. That is, the sum of state and control space dimensions that can be addressed is limited by the cardinality of the state-control pairs that need to be considered to fairly approximate reach-avoid probabilities. Typically, the required cardinality to keep approximations meaningful scales exponentially with  dimensions of state and action spaces. An important problem is therefore to explore approximation techniques that scale better.

Several researchers have developed approximate dynamic programming (ADP) techniques for various classes of stochastic control problems \cite{powell2007approximate,bertsekas2012dynamic}. Most of the existing work has focused on problems where the state and control spaces are finite but too large to directly solve DP recursions. Our work is motivated by the technique discussed in \cite{de2003linear} where the authors develop an ADP method for optimal control of an MDP with finite state and action spaces and an infinite horizon discounted additive stage cost. In this approach, the value function of the stochastic control problem is approximated as a weighted sum of basis functions, where the weights are the solution to a linear program (LP) \cite{de2004constraint}. The number of constraints in the LP is equal to the cardinality of state and action spaces. Hence, computation becomes challenging for large MDPs. To handle this, a constraint sampling approach with probabilistic bounds has been proposed in \cite{de2004constraint}. 

For optimal control of MDPs with continuous state and action spaces and an additive stage cost, an infinite dimensional linear program has been developed to characterize the value function \cite{hernandez1996discrete}. Here, the decision variable is the value function defined over the uncountable state space, hence, it is  infinite dimensional. Furthermore,   the number of constraints is uncountably infinite since there is one constraint corresponding to each state, action pair. The authors in \cite{hauskrecht2003linear, kveton2006solving} consider a similar setup extending to mixed continuous and discrete state variables. They also propose approximating the value function (the infinite dimensional decision variable) as a weighted sum of basis functions and devise an efficient approach to solving the resulting large-scale LP by considering dynamical systems that are modeled by or can be fairly approximated using the so-called ``factored'' MDPs. In contrast to this line of work, we address a finite-horizon reach-avoid problem over continuous state and action spaces and no discount factor. Furthermore, we make no a-priori assumption on whether the system dynamics can be factored.

The LP approach to stochastic reachability problem for MDPs over continuous state and action spaces and an infinite horizon was first proposed in \cite{kamgarpour2013HSCC}. An infinite dimensional linear program was formulated whose solution, in theory, would characterize the maximum reachability probability over the continuous state space. However, no computational approach to solving this problem was provided. In general, LP approaches to ADP are desirable since several commercially available software packages can handle LP problems with large numbers of decision variables and constraints. Motivated by this observation and leveraging advances in the past works of \cite{powell2007approximate,bertsekas2012dynamic, kamgarpour2013HSCC} we develop a computational framework to approximate the optimal value function and policy of a stochastic reach-avoid problem over continuous state and action spaces. 

Our contributions are as follows: First, we derive an infinite dimensional LP formulated over the space of Borel measurable functions  and prove its equivalence to the standard DP-based solution approach for the stochastic reach-avoid problem, under assumptions of the continuity of the MDP transition kernel and compactness of the action space. Second, we prove that through restricting the infinite dimensional decision space to a finite dimensional subspace spanned by a collection of basis functions (semi-infinite or robust LP), we obtain an upper bound on the stochastic reach-avoid value function. Third, we use randomized optimization to obtain a tractable finite dimensional LP with probabilistic feasibility guarantees. The final contribution of our paper is the focus on numerical validation of the LP approach to stochastic reach-avoid problems. As such, we propose a class of basis functions for reach-avoid problems for MDPs with Gaussian mixture kernels. Basis functions in this class have been successfully used in similar function approximation schemes such as \cite{kveton2006learning} due to their analytic properties. We then develop several benchmark problems to test the scalability and accuracy of the method. 

A preliminary version of our approach appeared as a brief conference paper \cite{kariotoglou2013approximate}. Compared to \cite{kariotoglou2013approximate}, we have extended and refined all theoretical statements. The Lemmas, Propositions and Theorems presented here were either missing from \cite{kariotoglou2013approximate} or not proven.  Furthermore, we have provided novel numerical studies to  illustrate the accuracy of the approach and its applicability to relatively large-scale problems compared to reach-avoid problems handled in the literature. Given that there are no competing approaches for the problem at hand to handle the considered large state-input dimensions, we have compared the results to well-studied heuristics, specifically tuned to approximate the solution to simple stochastic reach-avoid problems.



The rest of the paper is organized as follows. In Section \ref{sec:stochreach_DP} we introduce the stochastic reach-avoid problem for MDPs and formulate an infinite dimensional LP that characterizes its solution. In Section \ref{sec:lpcampi} we derive an approach to approximate the solution to the infinite LP through restricting the decision space to a finite dimensional subspace using basis functions and reducing the infinite constraints to finite constraints through randomized sampling. Section \ref{sec:implement} proposes Gaussian radial basis functions to analytically compute  operations arising in the LP for MDPs with Gaussian mixture kernels. In Section \ref{sec:evaluate} we validate the accuracy and scalability of the solution approach with three case studies. 

\section{Stochastic reach-avoid problem}
\label{sec:stochreach_DP}
We consider a discrete-time controlled stochastic process $x_{t+1}\sim Q(dx|x_t,u_t)$, $(x_t,u_t)\in \Xs \times\Us $. Here,  $Q:\mathcal{B}(\Xs)\times\Xs\times\Us\rightarrow [0,1]$ is a transition kernel and  $\mathcal{B}(\mathcal{X})$ denotes the Borel $\sigma$-algebra of $\mathcal{X}$. Given a state control pair $(x_t,u_t)\in \mathcal{X}\times\mathcal{U}$, $Q(A|x_t,u_t)$ measures the probability of $x_{t+1}$ belonging to the set $A\in\mathcal{B}(\Xs)$. The transition kernel $Q$ is a Borel-measurable stochastic kernel, that is, $Q(A|\cdot)$ is a Borel-measurable function on $\m{X} \times \m{U}$ for each $ A \in \m{B}(\m{X})$ and $Q(\cdot|x,u)$ is a probability measure on $\m{X}$ for each $(x,u)$. For the rest of the paper all measurability conditions refer to Borel measurability. We allow the state space $\mathcal{X}$ to be any subset of $\mathbb{R}^n$ and assume that the control space $\mathcal{U}\subseteq \mathbb{R}^m$  is compact. 

We consider a safe set $K^{\prime}\in\mathcal{B}(\mathcal{X})$ and a target set $K\subseteq K^{\prime}$. We define an admissible $T$-step control policy to be a sequence of measurable functions $\mu = \{\mu_0,\dots,\mu_{T-1}\}$ where $\mu_i:\mathcal{X}\rightarrow \mathcal{U}$ for each $i\in\{0,\dots,T-1\}$. The reach-avoid problem over a finite time horizon $T$ is to find an admissible $T$-step control policy that maximizes the probability of $x_t$ reaching the set $K$ at some time $j\leq T$ while staying in $K^{\prime}$ for all $0\leq t\leq j$. For any initial state $x_0$, we denote the reach-avoid probability associated with a given $\mu$ as $$r_{x_0}^\mu(K,K^{\prime}) = \Prb^{\mu}_{x_0}\{\exists j \in [0,T] : x_j \in K \wedge \forall i \in [0,j-1],\ x_i \in K^{\prime}\setminus K \}.$$ In the above, it is assumed that $[0,-1]=\emptyset$, which implies that the requirement on $i$ is automatically satisfied
when $x_0\in K$.

\subsection{Dynamic programming approach}
\label{sec:stochreach}

The reach-avoid probability $r_{x_0}^\mu(K,K^{\prime})$ can be equivalently formulated as an expected value objective function. In contrast to an optimal control problem with additive stage cost, $r_{x_0}^\mu(K,K^{\prime})$ is a history dependent sum-multiplicative cost function \cite{summers2010verification}:
\begin{align}
r_{x_0}^\mu(K,K^{\prime}) = \mathbb{E}_{x_0}^\mu\left [\sum^{T}_{j=0}{\left( \prod^{j-1}_{i=0}{\mathds{1}_{K^{\prime}\setminus K}(x_i)}       \right)} \mathds{1}_K(x_j)        \right],
\label{eq:raprob}
\end{align} 
where we use the notation of $\prod_{i=k}^j(\cdot)=1$ if $k > j$. Above,  $\mathds{1}_A(x)$ denotes the indicator function of a set $A\in \mathcal{B}(\Xs)$. Our objective is to find $\sup_{\mu} r^\mu_{x_0}(K,K^\prime)$ and the optimal policy achieving the supremum. The sets $K$ and $K^\prime$ can be time-varying or  stochastic \cite{summers2013stochastic} but for simplicity we assume here that they are constant. We denote the difference between the safe and target sets by $\bar{\Xs}:=K^\prime \setminus K$ to simplify the presentation of our results.

Similar to the dynamic programming approach to an optimal control problem with additive stage cost, the solution to the reach-avoid problem is characterized by a  recursion \cite{summers2010verification} as follows. Define the value functions $V_k^*:\mathcal{X}\rightarrow [0,1]$ for $k=T-1,\dots,0$ as
\begin{align}
\label{eq:DP}
\begin{split}
&V_{T}^*(x)=\mathds{1}_K(x),\\
&V_k^*(x)=\sup_{u\in \mathcal{U}}\left\{\mathds{1}_K(x)+\mathds{1}_{\bar{\Xs}}(x)\int_{\mathcal{X}}{V^*_{k+1}(y)Q(dy|x,u)}\right\}.
\end{split}
\end{align}
It can be shown that $V_0^*(x_0)=\sup_{\mu} r^\mu_{x_0}(K,K^\prime)$  \cite{summers2010verification}. Past work has  focused on approximating $V_k^*$ recursively on a discretized grid of $\bar{\Xs}$ and $\m{U}$ \cite{prandini2006stochastic, abate2007computational,summers2010verification}. Note that the DP recursion defined by \eqref{eq:DP} does not fall into the category of additive discounted cost problems. This difference, together with the finite horizon considered, yield certain approximation approaches for  MDPs with discounted additive cost function not applicable to the problem at hand.

Next, we will establish the measurability and continuity properties of the reach-avoid value functions to enable the use of a linear program  to approximate these functions.
\begin{assumption}
\label{assum:cont_kernel}
For every $x \in \m{X}, A \in \m{B}(X)$ the mapping $u \mapsto Q(A|x,u)$ is continuous.
\end{assumption}

\begin{prop}
\label{prop:u_attained}
Under Assumption \eqref{assum:cont_kernel}, at every step $k$, the supremum in \eqref{eq:DP} is attained by a measurable function $\mu_k^*:\m{X} \rightarrow \m{U}$ and the resulting $V^*_k: \Xs \rightarrow [0,1]$ is measurable. 
\label{prop:attain_pol}
\end{prop}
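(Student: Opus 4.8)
The plan is to prove the two claims by backward induction on $k$, starting from $k=T$ and descending to $k=0$. The base case $V_T^* = \mathds{1}_K$ is measurable since $K \in \m{B}(\m{X})$, and there is no supremum to attain. For the inductive step, assume $V_{k+1}^*: \m{X} \to [0,1]$ is measurable; I want to show that the map $u \mapsto \int_{\m{X}} V_{k+1}^*(y)\, Q(dy|x,u)$ is continuous in $u$ for each fixed $x$, that the supremum over the compact set $\m{U}$ is attained by a measurable selector $\mu_k^*$, and that the resulting $V_k^*$ is measurable.

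The key technical step is continuity of $u \mapsto \int V_{k+1}^*(y) Q(dy|x,u)$. First I would observe that Assumption~\ref{assum:cont_kernel} gives continuity of $u \mapsto \int \mathds{1}_A(y) Q(dy|x,u) = Q(A|x,u)$ for indicator functions, hence (by linearity) for simple functions, and then extend to the bounded measurable function $V_{k+1}^*$ by a monotone/uniform approximation argument: since $0 \le V_{k+1}^* \le 1$, it is a uniform limit of simple functions $s_j$ with $\|s_j - V_{k+1}^*\|_\infty \to 0$, and since each $Q(\cdot|x,u)$ is a probability measure, $|\int s_j\, dQ - \int V_{k+1}^*\, dQ| \le \|s_j - V_{k+1}^*\|_\infty$ uniformly in $u$; a uniform limit of continuous functions is continuous. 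Then the integrand of \eqref{eq:DP}, namely $g_k(x,u) := \mathds{1}_K(x) + \mathds{1}_{\bar{\Xs}}(x)\int V_{k+1}^*(y) Q(dy|x,u)$, is continuous in $u$ for each fixed $x$, so the supremum over the compact set $\m{U}$ is attained; this gives $V_k^*(x) = \max_{u \in \m{U}} g_k(x,u)$.

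For measurability of $V_k^*$ and existence of a measurable selector $\mu_k^*$, I would invoke a measurable maximum theorem / measurable selection result (e.g.\ the Kuratowski--Ryll-Nardzewski selection theorem, or the version in Bertsekas--Shreve for normal integrands). The hypotheses to check are: $g_k$ is a Carath\'eodory-type function, i.e.\ measurable in $x$ for each $u$ and continuous in $u$ for each $x$. Continuity in $u$ was just established; measurability in $x$ follows because $x \mapsto Q(A|x,u)$ is Borel-measurable (part of the standing stochastic-kernel assumption), so $x \mapsto \int V_{k+1}^*(y) Q(dy|x,u)$ is measurable by the same simple-function approximation argument applied in the $x$ variable, and $\mathds{1}_K$, $\mathds{1}_{\bar{\Xs}}$ are measurable. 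Given a Carath\'eodory function on $\m{X} \times \m{U}$ with $\m{U}$ compact metric, the value function $x \mapsto \sup_u g_k(x,u)$ is measurable and admits a measurable argmax selector; this closes the induction.

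The main obstacle I expect is purely a matter of citing and correctly applying the right measurable-selection theorem with its exact hypotheses — the analytic content (continuity via uniform approximation by simple functions, compactness giving attainment) is routine, but one must be careful that $\m{U}$ is compact metric and that $g_k$ genuinely satisfies the Carath\'eodory conditions at every stage, which is why the induction carries the measurability of $V_{k+1}^*$ as part of the hypothesis rather than just its boundedness.
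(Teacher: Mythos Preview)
Your proposal is correct and follows essentially the same approach as the paper: backward induction on $k$, continuity of $u \mapsto \int V_{k+1}^*(y)\,Q(dy|x,u)$ for each $x$, attainment by compactness of $\m{U}$, and a measurable-selection theorem to produce $\mu_k^*$ and conclude measurability of $V_k^*$. The only difference is cosmetic: where the paper cites external results (Nowak for the continuity step, Brown and Bertsekas--Shreve for selection and joint measurability), you spell out the continuity via uniform approximation by simple functions and frame the selection step in Carath\'eodory terms---the logical skeleton is identical.
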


\begin{pf}
By induction. First, note that the indicator function $V^*_T(x)=\mathds{1}_K(x)$ is measurable. Assuming that $V^*_{k+1}$ is measurable we will show that $V^*_{k}$ is also measurable. Define $F(x,u ) = \int_{\mathcal{X}}{V^*_{k+1}(y)Q(dy|x,u)}$. Due to continuity of the map $u \mapsto Q(A|x,u)$ by Assumption \ref{assum:cont_kernel}, the map $u \mapsto F(x,u)$ is continuous for every $x$ (\cite[Fact 3.9]{nowak1985universally}). Since $\Us$ is compact, there exists a measurable function $\mu^*_k(x)$ that achieves the supremum \cite[Corollary 1]{brown1973measurable}. Furthermore, by \cite[Proposition 7.29]{bertsekas1976dynamic}, the mapping $(x,u) \mapsto F(x,u)$ is measurable. It follows that $F(x,\mu^*_k(x))$, and hence $V^*_k$, is measurable as it is composition of measurable functions. 
\end{pf}

Proposition \ref{prop:attain_pol} allows one to compute an optimal feedback policy at each stage $k$ through
\begin{align}
\nonumber
\mu^*_k(x)&=\arg\max_{u\in \mathcal{U}}\left\{\mathds{1}_K(x)+\mathds{1}_{\bar{\Xs}}(x)\int_{\mathcal{X}}{V^*_{k+1}(y)Q(dy|x,u)}\right\}\\
&=\arg\max_{u\in \mathcal{U}}\left\{\int_{\mathcal{X}}{V^*_{k+1}(y)Q(dy|x,u)}\right\}.
\label{eq:optpol}
\end{align}

For  functions $f,g:X \rightarrow \mathbb{R}$, we use $f \leq g$ to denote $f(x) \leq g(x),\; \forall x \in X$. It is easy to verify by induction that  $0\leq V^*_k \leq 1 $, for $k = T, T-1, \dots, 0$. Furthermore, due to the indicator functions in \eqref{eq:DP}, $V^*_k(x)$ are defined on disjoint regions of $\Xs$ as:
\begin{align}
{V}_k^*(x)=
\begin{cases}
1, &x\in K  \\
\max_{u\in\Us}\int_{\mathcal{X}}{{V}^*_{k+1}}(y)Q(dy|x,u), &x\in \bar{\Xs} \\
0,&x\in \Xs\setminus {K'}
\end{cases}
\label{eq:partitionVF}
\end{align}
Hence, it suffices to compute $V^*_k$  and the optimizing policy on $\bar{\Xs}$. We show that with an additional assumption on kernel $Q$, $V^*_k$ is continuous on $\bar{\Xs}$. The continuity is a desired property for approximating $V^*_k$ on $\bar{\Xs}$ using basis functions. 

\begin{assumption}
\label{assum:cont_kernel2}
For every $A \in \m{B}(\mathcal{X})$ the mapping $(x,u) \mapsto Q(A|x,u)$ is continuous.
\end{assumption}

\begin{prop}
\label{prp:cont_vf}
Under Assumption \eqref{assum:cont_kernel2}, $V^*_k(x)$ is piecewise continuous on ${\Xs}$.
\end{prop}

\begin{pf}
From  continuity of $(x,u) \mapsto Q(A|x,u)$ we conclude that the mapping $(x,u) \mapsto F(x,u)$ is continuous (\cite[Fact 3.9]{nowak1985universally}). From the Maximum Theorem \cite{sundaram1996first}, it follows that $F(x,u^*(x))$ and thus each $V^*_k(x)$, is continuous on $\bar{\Xs}$. The result follows by \eqref{eq:partitionVF}. 
\end{pf}

\subsection{Linear programming approach}
\label{sec:inflp}
Let $\m{F} := \{ f : \Xs \rightarrow \mathbb{R}, \; f \text{ is measurable} \}$. For $V\in \mathcal{F}$  define two operators $T_u, T : \m{F} \rightarrow \m{F}$ 
\begin{align}
\label{eq:bellop}
\mathcal{T}_u[V](x) &=\int_{\mathcal{X}}{V}(y)Q(dy|x,u), \\
\label{eq:bellop_u}
\mathcal{T}[V](x) &=\max_{u \in \mathcal{U}} \mathcal{T}_u[V](x).
\end{align}
Let $\nu$ be a non-negative measure supported on $\bar{\Xs}$, referred to as state-relevance measure. 
\begin{thm}
\label{prp:infinite_opt}
Suppose Assumption \ref{assum:cont_kernel} holds. For $k \in \{0,\dots,T-1 \}$, let $V^*_{k+1}$ be the value function at step $k+1$ defined in \eqref{eq:DP}. Consider the infinite dimensional linear program: 
\begin{align}
\label{eq:inflp}\tag{Inf-LP}
\inf_{V(\cdot)\in \F} \quad &\int_{\bar{\Xs}} V(x) \nu(\dx) \\
\label{eq:constraint}
\st \quad & V(x)\geq \mathcal{T}_u[V^*_{k+1}](x), \quad \forall (x,u)\in \bar{\mathcal{X}}\times\mathcal{U}.
\end{align}
(a) Any feasible solution of  \eqref{eq:inflp}  is an upper bound on the optimal reach-avoid value function $V^*_k$; (b) $V^*_k$ is a solution to this optimization problem and any other solution to \eqref{eq:inflp} is equal to $V^*_k$, $\nu$-almost everywhere on $\bar{\Xs}$.
\label{prop:1norm}
\end{thm} 
\textbf{Remark:} In order to evaluate a constraint in \eqref{eq:constraint} when an element $(x,u)$ is fixed, one has to know the value of $V^*_{k+1}$ on the set $K$. This value is  by definition of the DP in \eqref{eq:DP}  equal to one. Note that the decision variable in \ref{eq:inflp} lives in $\m{F}$, an infinite dimensional space. The objectives and constraints are linear in the decision variable but there are infinitely many constraints since $\bar{\Xs}$ and $\m{U}$ are continuous spaces. This class of problems is referred to in literature as an infinite dimensional linear program \cite{anderson1987linear,hernandez1998approximation}.

\begin{pf}
Let $J^* \in \mathbb{R}$ denote the optimal value of the objective function in \eqref{eq:inflp}. From Proposition \ref{prop:u_attained}, $V^*_k\in \F$ and is equal to the supremum over $u\in\Us$ of the right hand side of the constraint  \eqref{eq:constraint}.  Hence, for any feasible $V \in \mathcal{F}$, we have  $V(x) \geq V^*_k(x)$ for all $x \in \bar{\Xs}$ and part (a) is shown. By non-negativity of $\nu$ it follows that for any feasible $V$, $\int_{\bar{\Xs}} V(x) \nu(\dx) \geq \int_{\bar{\Xs}} V^*_k(x) \nu(\dx)$, which implies $J^*\geq \int_{\bar{\Xs}} V^*_k(x) \nu(\dx)$. On the other hand, $J^* \leq \int_{\bar{\Xs}} V^*_k(x) \nu(\dx)$ since it is the least cost among the set of feasible  functions. Hence, $J^* = \int_{\bar{\Xs}} V^*_k(x)\nu(\dx)$ and $V^*_k$ is an optimal solution. To show that any other solution to \eqref{eq:inflp} is equal to $V_k^*$ $\nu$-almost everywhere on $\bar{\Xs}$, assume  there exists a function $V^*$, optimal for \eqref{eq:inflp} that is strictly greater than $V_k^*$ on a set $A_m\in \mathcal{B}(\Xs)$ of non-zero $\nu$-measure. Since $V^*$ and $V^*_k$ are both optimal, we have that $\int_{\bar{\Xs}} V^*(x)\nu(\dx)=\int_{\bar{\Xs}} V_k^*(x)\nu(\dx)=J^*$. We can then reduce $V^*$ to the value of $V^*_k$ on $A_m$, while ensuring feasibility of $V^*$. This reduces the value of $\int_{\bar{\Xs}} V^*(x)\nu(\dx)$ below $J^*$, contradicting that $V^*$ is optimal and part (b) is shown.
\end{pf}
As shown in Theorem \ref{prp:infinite_opt}, the sequence of value functions of the stochastic reach-avoid problem derived in \eqref{eq:DP} are equivalently characterized as  solutions of a sequence of infinite dimensional linear programs. Thus, instead of the classical  space gridding approaches to solve \eqref{eq:DP}, we focus on approximating $V^*_k$ by approximating the solutions to \eqref{eq:inflp}.

\section{Approximation with a finite linear program}
\label{sec:lpcampi}
An infinite dimensional LP  is in general NP-hard \cite{anderson1987linear,hernandez1998approximation}. We approximate the solution to  \eqref{eq:inflp} by deriving a finite LP through two steps. First, we restrict the decision space to a finite dimensional subspace $\m{F}^M \subset \m{F}$. Second, we replace the infinite constraints in \eqref{eq:constraint} with a sufficiently large finite number of randomly sampled constraints. 

\subsection{Restriction to a finite dimensional function class}

Let $\mathcal{F}^M$ be a finite dimensional subspace of $\mathcal{F}$ spanned by $M$ basis elements denoted by $\{\phi_i\}_{i=1}^M$. For a fixed function $f\in \F$, consider the following semi-infinite linear program defined over functions $\sum_{i=1}^M w_i\phi_i(x) \in \m{F}^M$ with decision variable $w\in \mathbb{R}^M$:
\begin{align}
\label{eq:semiinfLP}\tag{Semi-LP}
\min_{{w}_1,\dots,{w}_{M}}\quad &\sum_{i=1}^{M} {w}_i \int_{\bar{\Xs}} \phi_i(x)\nu(\dx)\\
\label{eq:semiinfCon}
\st \quad & \sum_{i=1}^{M} {w}_i\phi_i(x)\geq\mathcal{T}_{u}[f](x), \ \forall (x,u) \in \bar{\Xs}\times \Us.
\end{align}
The above linear program has finitely many decision variables and infinitely many constraints. It is referred to as a semi-infinite linear program. 

We assume that problem \eqref{eq:semiinfLP} is feasible. Note that for a bounded  $f$, this can always be guaranteed by  including $\phi(x) = 1$ in the basis functions.  Consider the following semi-norm on $\mathcal{F}$ induced by the state-relevance measure $\nu$, $\|V\|_{1,\nu}:=  \int_{\bar{\Xs}} |V(x)|\nu(\dx)$. In the infinite dimensional linear program \eqref{eq:inflp} the choice of $\nu$ does not affect the optimal solution, as seen in Theorem \eqref{prp:infinite_opt}. For finite dimensional approximations, as will be shown in the next Lemma,  $\nu$ influences approximation accuracy in different regions of $\bar{\Xs}$.

Let $\hat{V}_f =  \sum_{i=1}^M \hat{w}_i\phi_i$ be a solution to \eqref{eq:semiinfLP} and $V^*_f \in \m{F}$ be a solution to \eqref{eq:inflp}.
\begin{lemma}
\label{lem:basis_finite}
$\hat{V}_f$ achieves the minimum of $ \big\|V-V^*_f\big\|_{1,\nu}$, over the set $\{V \in \m{F}^M, \; V \geq V^*_f\}$.
\end{lemma}
\begin{pf}
It follows from the proof of Theorem \eqref{prp:infinite_opt} that $V^*_f = \sup_u \mathcal{T}_u[f]$, $\nu$-almost everywhere. Now, a function $\hat{V} \in \m{F}^M $ is an upper bound on $V^*_f=\sup_u \mathcal{T}_u[f]$ if and only if it satisfies  constraint \eqref{eq:semiinfCon}. To show that $\hat{V}_f$ minimizes the $\nu$-norm distance to $V^*_f$, notice that for any ${V}(x)=\sum_{i=1}^{M} {w}_i\phi_i(x)$ satisfying \eqref{eq:semiinfCon} we have that
\begin{align*}
\|V-V^*_f \|_{1,\nu}=  \int_{\bar{\Xs}} |V(x)-V^*_f(x) |\nu(\dx) = &\int_{\bar{\Xs}} V(x)\nu(\dx) -\int_{\bar{\Xs}}V^*_f(x) \nu(\dx),
\end{align*}
where the second equality is due to the fact that $V$ is an upper bound of $V^*_f$.  Since $V^*_f$ is a fixed constant in the norm optimization of the lemma above, the result  follows. \end{pf}

The semi-infinite problem \eqref{eq:semiinfLP} can be used to recursively approximate $V^*_k$ using a weighted sum of basis functions. The next proposition formalizes this result. 

\begin{prop}
\label{prp:vfunction_zero}
For every $k\in\{0,\dots,T-1\}$, let $\mathcal{F}^{M_k}$ denote the span of a fixed set of $M_k$  basis elements $\{\phi^k_i\}_{i=1}^{M_k}$ where each $\phi_i^k\in\m{F}$. Start with the known value function $V^*_{T}$ and recursively construct $\hat{V}_k(x)=\sum_{i=1}^{M_k} {\hat{w}^k_i}\phi^k_i(x)$ where ${\hat{w}^k_i}$ is the solution to  \eqref{eq:semiinfLP} obtained by substituting $f=\hat{V}_{k+1}$ in \eqref{eq:semiinfLP}. Then,\\ (a) Each $\hat{V}_k$ is also a solution to the  problem:
\begin{align}
\label{eq:minerror}
\min_{V(\cdot)\in \F^{M_k}} \quad & \big\|V-V^*_k\big\|_{1,\nu} \\
\label{eq:recursive_ub}
\st \quad & V(x)\geq \mathcal{T}_u[\hat{V}_{k+1}](x), \quad \forall (x,u)\in \bar{\Xs}\times\mathcal{U}.
\end{align}
(b) $\hat{V}_k(x)\geq V_k^*(x)$ for all $x\in\bar{\Xs}$ and $k = 0, \dots, T-1$.
\end{prop}
\begin{pf}
First, we prove part (b) by induction. Note that at step $T-1$ the results above hold as a direct consequence of Lemma \eqref{lem:basis_finite}. Now, suppose at time step $k$, $\hat{V}_k(x)\geq V_k^*(x)$. From monotonicity of the operator $\m{T}_u$ \cite{summers2010verification}, it follows that $\m{T}_u [\hat{V}_k](x) \geq \m{T}_u [V^*_k](x)$. By constraint  \eqref{eq:recursive_ub}, it follows that $\hat{V}_{k-1}(x) \geq \m{T}_u [\hat{V}_k](x) \geq \m{T}_u [V^*_k](x) = V^*_{k-1}(x)$, where the last equality is due to the definition of $V^*_k$ in \eqref{eq:partitionVF}. Hence, part (b) is proven. To prove part (a), consider that $\hat{V}_k$ is the solution for \eqref{eq:semiinfLP} with $f=\hat{V}_{k+1}$ which implies that $\hat{V}_k(x)\geq \mathcal{T}_u[\hat{V}_{k+1}](x),\forall (x,u)\in \bar{\Xs}\times\mathcal{U}$ and it thus satisfies \eqref{eq:recursive_ub}. Being a solution to \eqref{eq:semiinfLP} also implies that $\hat{V}_k$ achieves the minimum of $\| V-{V}^*_{k+1}\|_{1,\nu}$ over the set $\{V \in \m{F}^{M_k}\}$. The cost function $\|V-V^*_k \|_{1,\nu}$ expands to $\|V-V^*_k \|_{1,\nu}=\int_{\bar{\Xs}} |V(x)-V^*_k(x) |\nu(\dx) = \int_{\bar{\Xs}} V(x)\nu(\dx) -\int_{\bar{\Xs}}V^*_k(x) \nu(\dx)$ where the last step follows from part (b) proven above. Hence minimizing $\| V-{V}^*_{k}\|_{1,\nu}$ is equivalent to minimizing $\| V-{V}^*_{k+1}\|_{1,\nu}$ since ${V}^*_{k+1}$ and $V^*_k(x)$ are fixed.
\end{pf}
Notice that since $\nu$ is a probability measure, it readily follows that the approximation error satisfies $ \big\|\hat{V}_k-V^*_k\big\|_{1,\nu} \leq \big\|\hat{V}_k-V^*_k\big\|_{\infty}$.

The above proposition shows that by restricting the decision space of the infinite dimensional linear program, we obtain an upper bound to the reach-avoid value functions $V^*_k$, at every step $k$, which is also the least upper bound in the space spanned by the basis functions subject to constraint \eqref{eq:recursive_ub}. To compute a constraint in \eqref{eq:recursive_ub} for a fixed pair $(x,u)$, one has to use of the fact that the ($k+1$)-approximate value function $\hat{V}_{k+1}$, can be set to 1 on the target set $K$ since the reach-avoid probability is by definition equal to 1 on  set $K$ (observe the reach-avoid statement in Section \ref{sec:stochreach_DP}).


\subsection{Restriction to a finite number of constraints}
\label{sec:finite_constr}
A semi-infinite linear program, such as \eqref{eq:semiinfLP} is in general NP-hard  \cite{bertsimas2011theory,ben2002robust,hettich1993semi} due to existence of infinitely many constraints, one for each state-action pair $(x,u) \in \bar{\Xs} \times \Us$. One way to approximate the solution is to select a finite set of points from $\bar{\Xs}\times \Us$ to impose the constraints on. One can then use generalization results from sampled convex programs \cite{calafiore2006scenario, campi2009scenario} to quantify the near-feasibility of the solution obtained from constraint sampling.  

Note that there are several methods to approximate semi-infinite LPs. An alternative approach that could be used in solving the robust or semi-infinite LPs is the constraint generation technique \cite{patrascu2002greedy,guestrin2003efficient}. In constraint generation, motivated by the fact that only finitely many constraints are active at the optimal solution, one tries to iteratively add constraints to the problem and reduce the gap between the approximate and the optimal solution. Such algorithms may be preferred due to their convergence properties. Here, we consider a one-shot sampling approach with probabilist guarantees. 


Let $S:=\{(x^i,u^i)\}_{i=1}^{N}$ denote a set of $N\in\mathbb{N}$ elements in $\bar{\Xs}\times\Us$. For a fixed function $f\in \F$, consider the following finite LP defined over functions $ \sum_{i=1}^M w_i \phi_i(x) \in \F^M$:
\begin{align}
\label{eq:finiteLP}\tag{Fin-LP}
\begin{split}
\min_{{w}_1,\dots,{w}_{M}}\quad &\sum_{i=1}^{M} {w}_i \int_{\bar{\Xs}} \phi_i(x)\nu(\dx)\\
\st \quad & \sum_{i=1}^{M} {w}_i\phi_i(x)\geq\mathcal{T}_{u}[f](x), \ \forall (x,u) \in S
\end{split}
\end{align}
Since the objective and constraints are linear in the decision variable, the sampling theorem \cite[Theorem 1]{campi2009scenario} applies. This theorem states that for any chosen pair of confidence and constraint violation probabilities (denoted by $\eps,\beta$), one only needs to consider a finite number of constraints (denoted by $N$) that can be readily computed from $\eps,\beta$ and the decision variables. It is a probabilistic feasibility guarantee similar to Probably-Approximately-Correct (PAC) statements, tailored to convex problems \cite{margellos2015connection}.
\begin{lemma}
\label{thm:campiVF}
Assume that for any  $S \subset \bar{\Xs}\times U$, the feasible region of \eqref{eq:finiteLP} is non-empty and the optimizer is unique. Choose the violation and confidence levels $\eps,\beta \in (0,1)$. Construct a set of samples $S$ by drawing $N$ independent points from $\bar{\Xs}\times \Us$ identically distributed  according to a probability measure on $\bar{\Xs}\times \Us$ denoted by $\Prb_{\bar{\Xs} \times \Us}$. Choose $N$ such that
\begin{align*}
N\geq \frac{2}{\varepsilon}\left(M+\ln\left(\frac{1}{\beta}\right)\right).
\end{align*}
Let $\tilde{w}^S$ be the  sample dependent optimizer in \eqref{eq:finiteLP}, and $\tilde{V}^S(x)=\sum_{i=1}^{M}{\tilde{w}^S_i}\phi_i(x)$. Then, 
\begin{align}
\label{eq:mstageGua}	
\Prb_{\bar{\Xs} \times \Us} (\tilde{V}^S(x)< \mathcal{T}_u[f](x))\leq \varepsilon
\end{align}
with confidence $1-\beta$,  with respect to the product measure $(\Prb_{\bar{\Xs} \times \Us})^N$.
\end{lemma}
The probabilistic expression in \eqref{eq:mstageGua} is referred to as violation of $\tilde{V}^S$ \cite{calafiore2006scenario, campi2009scenario}. Note that $\tilde{V}^S$ is a function of the $N$ sample realizations. As such, it can only be bounded  to an $\epsilon$-level with a confidence with respect to the product measure $(\Prb_{\bar{\Xs} \times \Us})^N$. 

We can recursively construct $\tilde{V}_{k} = \sum_{i=1}^{M_k}\tilde{w}^k_i \phi^k_i$ by solving \eqref{eq:finiteLP} using $f=\tilde{V}_{k+1}$ and a number $N_k(\eps_k, \beta_k, M_k)$, of samples.  It follows that with probability greater than $1-\beta_k$, the violation of $\tilde{V}_k$ is at most $\varepsilon_k$. Consequently, the approximation functions $\tilde{V}_k$ are probabilistic upper bounds on the value functions $V^*_k$, in contrast to the guaranteed upper bounds provided in Proposition \eqref{prp:vfunction_zero}.  


Note that one can also formulate the recursion of approximate linear programs as a single linear program by summing up the individual cost functions and sampling the constraints collectively. The authors in \cite{kariotoglou2016computational} analyze the change in probabilistic guarantees in this case and the advantages in computational complexity that one gets by treating the problems in a recursion as opposed to a single problem. We also note that this recursive approximation technique can be extended to arbitrarily large horizons with a linear increase in the number of decision variables. However, approximations may get progressively worse. 

To evaluate the accuracy of $\tilde{V}_k$, ideally, we  aim to find bounds on $\| \tilde{V}_k - V^*_k \|_{1,\nu}$ as a function of  $\| \hat{V}_k - V^*_k\|_{1,\nu}$, where $\hat{V}_k$ is computed according to Proposition \eqref{prp:vfunction_zero} and then determine the number of basis functions required to bound $\| \hat{V}_k - V^*_k\|_{1,\nu}$ to a given accuracy. As for the first problem, for a given accuracy in the objective function of a sampled convex program, one may be able to get bounds on the number of samples  depending on the Lipschitz constant of the objective function \cite{esfahani2013performance}. As for the second problem, the number of basis required to bound  $\| \hat{V}_k - V^*_k\|$ is heavily dependent on the basis function choice and Lipschitz continuity properties of the objective function.  For a technical discussion on the general problem of quantifying the error between an infinite dimensional LP  and approximations based on finite dimensional restrictions we refer readers to \cite{hernandez1998approximation,esfahani2013performance}. Note that the bounds derived in these works are only valid for discounted MDPs with additive stage-cost. Extension of the results to stochastic reach-avoid problems is more challenging due to the sum-multiplicative cost function structure and the lack of a discount factor. 

In the remainder of the paper we will evaluate the computational tractability and accuracy of  \eqref{eq:finiteLP}  in estimating reach-avoid value functions for a general class of MDPs.


\section{Radial basis functions for MDPs with Gaussian mixture kernels}
\label{sec:implement}
For a general class of MDPs modeled by Gaussian mixture kernels \cite{khansari2011learning} we propose using Gaussian radial basis functions (GRBFs) for approximating the reach-avoid value functions. Through this choice, the constraint in \eqref{eq:finiteLP} involving the integration $\m{T}_u[f]$ can be found in closed form. Moreover, it is known that radial basis functions are a sufficiently rich function class to approximate continuous functions \cite{hartman1990layered,sandberg2001gaussian,park1991universal,cybenko1989approximation}. In fact, the authors in \cite{kveton2006learning} also propose an algorithm for learning basis functions in the context of approximate linear programming using mean-parametrized GRBFs.

\subsection{Basis function choice}
\label{sec:rbasis}
To apply GRBFs in the reach-avoid framework, we consider the following problem data: 
\begin{enumerate}
\item The  kernel $Q$ is a Gaussian mixture kernel $\sum_{j=1}^J \alpha_j\mathcal{N}(\mu_j,\Sigma_j)$ with  diagonal covariance matrices $\Sigma_j$, means $\mu_j$ and weights $\alpha_j$ such that $\sum_{j=1}^J\alpha_j=1$ for a finite $J\in\mathbb{N}_+$.
\item The target and safe sets $K$ and $K^\prime$ are unions of disjoint hyper-rectangle sets, i.e. $K=\bigcup_{p=1}^P K_p=\bigcup_{p=1}^P \parens*{\prod_{l=1}^n [a^p_l,b^p_l]}$ and $K^\prime=\bigcup_{m=1}^M K^\prime_m=\bigcup_{m=1}^M \parens*{\prod_{l=1}^n [c^m_l,d^m_l]}$ for finite $P,M \in\mathbb{N}_+$ with $n=\dim\parens*{\Xs}$ and $a^p,b^p,c^m,d^m\in \mathbb{R}^n$, $\forall p, m$.
\end{enumerate}
The above restrictions apply to a large class of MDPs. For example, the kernel of general non-linear systems subject to additive Gaussian mixture noise is a Gaussian mixture kernel. Moreover, in several problems, the state and input constraints are decoupled in different dimensions resulting in disjoint hyper-rectangles as constraint sets. It should be noted that whenever the safe and target sets are polytopic and cannot be written as unions of disjoint hyper-rectangles, one
can approximate them as such to arbitrary accuracy using the methods in \cite{bemporad2004inner}. For a fixed approximation accuracy such algorithms are of polynomial complexity with respect to the dimension of the space. Approximations of more general sets with polytopes within a predefined accuracy is a much harder problem and algorithms may scale exponentially in the dimension of the problem \cite{bronstein2008approximation}.

For each time step $k$, let $\F^{M_k}$ denote the span of a set of GRBFs $\{\phi^k_i\}_{i=1}^{M_k}:\mathbb{R}^n\rightarrow \mathbb{R}$: 
\begin{align}
\small{
\phi^k_i(x) = \prod^n_{l=1}{\frac{1}{\sqrt{2\pi s^{k}_{i,l}}}\exp\parens*{-\frac{1}{2}\frac{(x_l-c^{k}_{i,l})^2}{s^{k}_{i,l}}}},
\label{eq:basisf}}
\end{align}
where $\{c^{k}_{i,l}\}_{l=1}^n \in \mathbb{R}, \{s^{k}_{i,l}\}_{l=1}^n \in \mathbb{R}_+$ are the centers and the variances, respectively, of the GRBF. The class of GRBFs is closed with respect to multiplication \cite[Section 2]{hartman1990layered}.  In particular, let $f^1=\sum_{i=1}^{M_k}{w^1_i \phi^k_i}$, $f^2=\sum_{j=1}^{M_k}{w^2_j \phi^k_j}$. Then, $f^1f^2=\sum_{i=1}^{M_k}\sum_{j=1}^{M_k}{w^1_iw^2_j \tilde{\phi}^k_{ij}}$, where the centers and variances of the bases $\tilde{\phi}^k_{ij}$ are explicit functions of those of $\phi^k_i, \phi^k_j$. 

Integrating the proposed GRBFs over a union of hyper-rectangles decomposes into one dimensional integrals of Gaussian functions. In particular, let $\tilde{V}_k(x)=\sum_{i=1}^{M_k} \tilde{w}^k_i\phi_i^k(x)$ denote the approximate value function at time $k$ and $A=\bigcup_{d=1}^D  \bracks*{[a^d_1,b^d_1] \times \allowbreak\cdots \times\allowbreak [a^d_n,b^d_n]}$, a finite union of hyper-rectangles. The integral of $\tilde{V}_k$ over $A$ after some algebra reduces to
\begin{align}
\begin{split}
\int_{A}{\tilde{V}_k(x)\nu(\dx)} &= \sum_{d=1}^D \sum_{i=1}^{M_k} \tilde{w}^k_i{\prod_{l=1}^n{\left(\frac{1}{2}\erf\left(\frac{b^d_l-c^{k}_{i,l}}{\sqrt{2s^{k}_{i,l}}}\right)-\frac{1}{2}\erf\left(\frac{a^d_l-c^{k}_{i,l}}{\sqrt{2 s^{k}_{i,l}}}\right)\right)}},
\end{split}
\label{eq:integral_rbf}
\end{align}
where $\nu$ is assumed to be uniform product measure on each dimension $d$ and $\erf$ denotes the error function defined as $\erf(x)=\tfrac{2}{\sqrt{\pi}}\int_0^{x}\exp\parens*{-t^2}\text{dt}$. 

Due to the decomposition of the reach-avoid value functions on the sets $ K=\cup_{p=1}^P K_p$ and $\bar{\Xs}=(K'=\cup_{m=1}^M K'_m)\setminus K$ as stated in \eqref{eq:partitionVF},  $\m{T}_u[\tilde{V}_k]$ in \eqref{eq:bellop} is equivalent to
\begin{align}
\int_{\mathcal{X}}{\tilde{V}_{k}}(y)Q(dy|x,u)&= \sum_{m=1}^M\sum_{i=1}^{M_{k}}\tilde{w}^k_i\int_{K^\prime_m}\phi^k_i(y)Q(dy|x,u)+\sum_{p=1}^P\int_{K_p}Q(dy|x,u).
\label{eq:integral_space}
\end{align}
Since  a Gaussian mixture kernel $Q$ can be written as a GRBF, every term inside the integral above is a product of GRBFs. Hence, it is a GRBF with known centers and variances.  The integrals over $K_p$ and $K^\prime_m$ can thus be computed using \eqref{eq:integral_rbf} at a sampled point $(x^s,u^s)$. 
\subsection{Recursive value function and policy approximation}
\label{sec:recur_approx}
We summarize the method to approximate the reach-avoid value function in Algorithm \ref{algo:vfalgo}. The design choices include the number of basis functions, their centers and variances, the sample violation and confidence bounds in Lemma \ref{thm:campiVF} and the state-relevance weights. The number of basis functions is problem dependent and in our case studies, we use trial and error to fix this number. We choose the centers of the GRBFs by sampling them from a  uniform probability measure supported on $\bar{\Xs}$. We sample the variances from a uniform measure supported on a  bounded set that depends on problem data. Note that the method is still applicable if centers and variances are not sampled but set in another way, for example using neural network training or trial and error. Typically, $\varepsilon$ and $\beta$ are chosen to be close to 0 to enhance the feasibility guarantees of Lemma \ref{thm:campiVF} at the expense of more constraints in \eqref{eq:finiteLP}. Furthermore, we choose the state-relevance measure $\nu$ as a uniform product measure on the space $\bar{\Xs}$ to use the analytic integration in \eqref{eq:integral_rbf}. This corresponds to equal weighting on potential errors on different state-space regions. 

Given the approximate value functions, we compute the so-called greedy control policy:
\begin{align}
\begin{split}
\tilde{\mu}_k(x)&=\arg\max_{u\in \mathcal{U}}\int_{\Xs}{\tilde{V}_{k+1}(y)Q(dy|x,u)}.
\end{split}
\label{eq:approx_pol}
\end{align}
The optimization problem in \eqref{eq:approx_pol} is non-convex. However, the cost function is smooth with respect to $u$ for a fixed $x\in\bar{\Xs}$, the gradient and Hessian information can be analytically obtained using the $\erf$ function and the decision space $\Us$ is typically low dimensional (in most mechanical systems for example, $\dim{\Us}\leq\dim{\Xs})$. Thus, a locally optimal solution can be obtained efficiently using off-the-shelf optimization solvers.

\begin{algorithm}[t]
\caption{linear programming based reach-avoid value function approximation}
\label{algo:vfalgo}
\begin{algorithmic}
\STATE \textbf{Input Data:} 
\vspace{-.3cm}
\begin{itemize}
 \setlength\itemsep{.04em}
\item State and control spaces $\bar{\Xs}\times \Us$, reach-avoid time horizon $T$.
\item Target and safe sets $K$ and $K^\prime$, written as unions of disjoint hyper-rectangles.
\item Centers and variances of the MDP Gaussian mixture kernel $Q$.
\end{itemize}
\STATE \textbf{Design parameters:} 
\vspace{-.3cm}
\begin{itemize}
 \setlength\itemsep{.04em}
\item Number of basis functions $\bracks*{M_k}_{k=0}^{T-1}$.
\item Violation and confidence levels $\bracks*{\varepsilon_i}_{i=0}^{T-1}$, $\bracks*{1-\beta_i}_{i=0}^{T-1}$,  probability measure $\Prb_{\bar{\Xs}\times \Us}$.
\item Probability measure of centers and variances for the basis functions $\{\phi^k_i\}_{i=1}^{M_k}$.
\item State-relevance measure $\nu$ decomposed as a product measure on the state space.
\end{itemize}

\STATE Initialize $\tilde{V}_T(x)\leftarrow \mathds{1}_{K}(x)$.
\FOR {$k=T-1, T-2$, $\dots$, 0}
\STATE Construct $\mathcal{F}^{M_k}$ by sampling $M_k$ centers $\bracks*{c_i}_{i=1}^{M_k}$ and variances $\bracks*{s_i}_{i=1}^{M_k}$ according to the chosen probability measures.
\STATE Sample $N(\varepsilon_k,\beta_k,M_k)$ pairs $(x^s,u^s)$ from $\bar{\Xs}\times \Us$ using the measure $\Prb_{\bar{\Xs}\times \Us}$.
\FORALL {$\parens*{x^s,u^s}$}
	\STATE Evaluate $\mathcal{T}_{u^s}[\tilde{V}_{k+1}]\parens*{x^s}$ using \eqref{eq:integral_space}.
\ENDFOR
\STATE Solve the finite LP in \eqref{eq:finiteLP} to obtain $\tilde{w}^k=(\tilde{w}^k_1,\dots,\tilde{w}^k_{M_k})$.
\STATE Set the approximated value function on $\bar{\Xs}$ to $\tilde{V}_k (x) = \sum_{i=1}^{M_k}\tilde{w}^k_i \phi^k_i (x)$.
\ENDFOR
\end{algorithmic}
\end{algorithm}

\section{Numerical case studies}
\label{sec:evaluate}

We develop and solve a series of benchmark problems and evaluate our approximate solutions in two ways. First, we compute the closed-loop  empirical reach-avoid policy by applying the approximated control input  obtained from \eqref{eq:approx_pol}. Second, we use scalable  alternative approaches to approximate the benchmark reach-avoid problems. To this end, we consider three reach-avoid problems that differ in structure and complexity. The first two examples are academic and illustrate the scalability and accuracy of the approach. The last example is a practical problem, where the approach was also implemented on a miniature race-car testbed. Throughout, we refer to our approach as the ADP  approach. All computations were carried out on an Intel Core i7 Q820 CPU clocked at 1.73 GHz with 16GB of RAM memory, using IBM's CPLEX optimization toolkit in its default settings.

\subsection{\textbf{Example 1}}
\label{sec:origin_lqg}
We consider linear systems with additive Gaussian noise, $x_{k+1}= x_k + u_k +\omega_k,$
where $x_k \in \Xs= \mathbb{R}^n$, $u_k\in \Us= [-0.1,0.1]^n$ and $\omega_k$ is distributed as a Gaussian random variable $\omega_k\sim\mathcal{N}\parens*{\mathbf{0}_{n\times 1},\Sigma}$ with diagonal covariance matrix. We consider a target set $K=\sqbracks*{-0.1,0.1}^n$ centered at the origin and a safe set $K^\prime=\sqbracks*{-1,1}^n$ (see Figure \ref{fig:LQRfig} for a 2D illustration). The objective is to reach the target set while staying in the safe set over a horizon of $T=5$ steps. We  approximated the value function using Algorithm \ref{algo:vfalgo} for a range of system dimensions  $\dim\parens*{\XUs}=4,6,8$,  to analyze scalability and accuracy of the LP-based reach-avoid solution in a benchmark problem that scales up in a straightforward way.  

\begin{figure}[t]
\centering
\begin{minipage}[t]{0.44\linewidth}
\centering
\vspace{0pt}
\includegraphics[width=7cm, height=4.5cm]{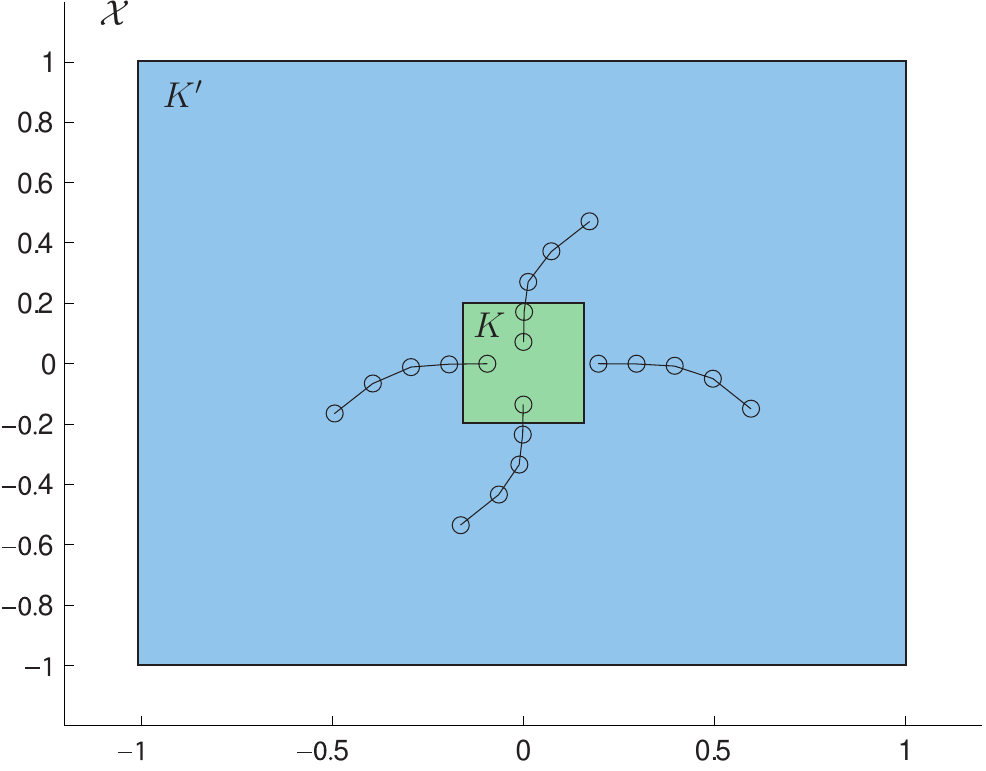}%
\captionof{figure}{2D depiction of safe and target sets and sample trajectories.}
\label{fig:LQRfig}%
\end{minipage}\hfill
\begin{minipage}[t]{0.5\linewidth}
\centering
\vspace{0pt}
\begin{tabular}{llll}
\textbf{$\dim\parens*{\XUs}$}&4D&6D&8D \\
\hline
$M_k$ &100 & 500 & 1000 \\
$N_k$ & 4184 &  20184 & 40184\\
$\varepsilon_k$& 0.05 &0.05& 0.05 \\
$1-\beta_k$& 0.99 &0.99& 0.99 \\
$\|\tilde{V}_0-V_{\text{ADP}}\|$ & 0.0692 &  0.104&    0.224\\
Construction  (sec)&4&85&450\\
LP solution (sec) &2&50&520\\
Memory (MB) & 3.2& 80&320\\
\hline
\end{tabular}
\captionof{table}{Parameters and properties of the value function approximation scheme.}
\label{tab:compareLQR}
\end{minipage}
\vspace*{-.3cm}
\end{figure}

The transition kernel of the considered linear system is Gaussian $x_{k+1}\sim \mathcal{N}(x_k+u_k,\Sigma)$. The sets $K$ and $K^\prime$ are hyper-rectangles. Thus, the GRBF framework applies. We chose $100$, $500$ and $1000$ GRBF elements for the reach-avoid problems of $\dim(\XUs)=4,6,8$, respectively (Table \ref{tab:compareLQR}). We used uniform measures supported on $\bar{\Xs}$ and $[0.02,0.095]^n$  to sample the GRBFs' centers and variances, respectively. The violation and confidence levels for every $k\in \{0,\dots,4\}$ were set to $\varepsilon_k=0.05$, $1-\beta_k=0.99$ and the measure $\Prb_{\bar{\Xs}\times\Us}$ required to generate samples from $\bar{\Xs}\times \Us$ was chosen to be uniform. Since there is no reason to favor some states more than others, we also chose $\nu$ as a uniform measure, supported on $\bar{\Xs}$. Following Algorithm \ref{algo:vfalgo} we obtain a sequence of approximate value functions $\{\tilde{V}_k\}_{k=0}^{4}$. 

To evaluate the performance of the approximation, we sampled 100 initial conditions $x_0$, uniformly from $\bar{\Xs}$. For each initial condition we generated 100  noise trajectories $\{\omega_k\}_{k=0}^{T-1}$. We computed the policy along the resulting state trajectory using \eqref{eq:approx_pol}. We then counted the number of trajectories that successfully completed the reach-avoid objective, i.e. reach $K$ without leaving $K^\prime$ in $T$ steps. In Table \ref{tab:compareLQR} we denote by $\|\tilde{V}_0-V_\text{ADP}\|$ the mean absolute difference between the empirical success denoted by $V_\text{ADP}$, and the predicted performance $\tilde{V}_0$, evaluated over the considered initial conditions. The memory and computation times reported correspond to constructing and solving each LP. 

Since the system is linear, the noise is Gaussian and the target and safe sets are symmetric and centered around the origin, we can use  the so-called Linear Quadratic Gaussian (LQG) controller \cite{bertsekas1995dynamic}. This controller has the objective to drive the states close to the origin while ensuring the energy of the input is minimized. The closed-form optimal policy for the LQG problem can be easily computed \cite{bertsekas1995dynamic}. As such, by properly tuning the corresponding weights of the states and inputs in the LQG objective based on the target and constraint sets, we can  heuristically achieve the reach-avoid objective. This is further explained below.

\begin{figure}
\centering
\begin{subfigure}[b]{0.45\textwidth}
\includegraphics[width=\textwidth]{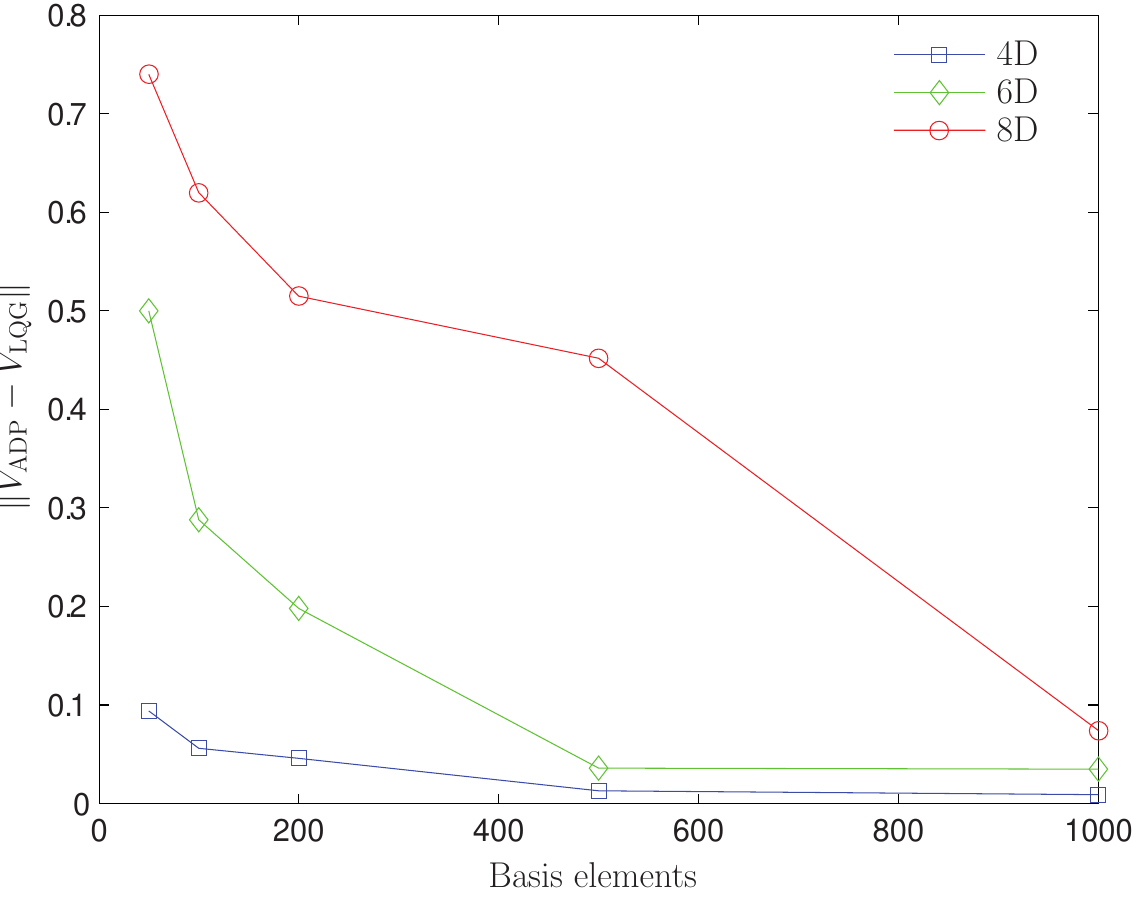}
\caption{Mean absolute difference between the empirical reach-avoid probabilities achieved by the ADP ($V_\text{ADP}$) and LQG ($V_\text{LQG}$) policies as a function basis number. {For each case of $\dim\parens*{\XUs}=4,6,8$, the number of samples is kept constant over the different number of basis elements $50,100,200,500,1000$ and is equal to the numbers reported in Table \ref{tab:compareLQR}.}}
\label{fig:tiger}
\end{subfigure}
\hspace{.3cm}
\begin{subfigure}[b]{0.51\textwidth}
\includegraphics[width=\textwidth]{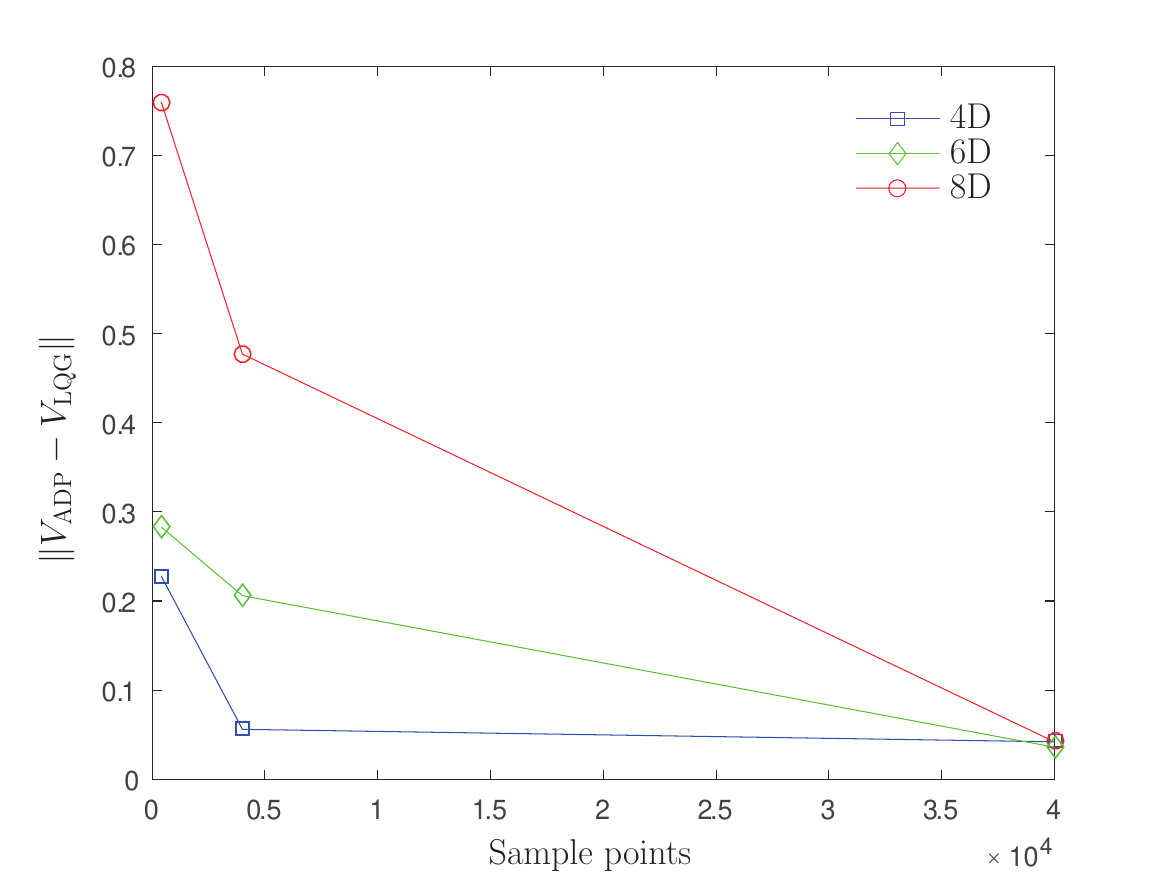}
\caption{Mean absolute difference between the empirical reach-avoid probabilities achieved by the ADP ($V_\text{ADP}$) and LQG ($V_\text{LQG}$) policies as a function of number of samples. {For each case of $\dim\parens*{\XUs}=4,6,8$, the number of basis elements is kept constant over the different number of sample elements $400,4000,40000$ and is equal to the numbers reported in Table \ref{tab:compareLQR}}.}
\label{fig:gull}
\end{subfigure}
\caption{Example 1 - performance of the algorithm as a function of parameters.}\label{fig:animals}
\end{figure}

The LQG problem for a linear  stochastic system $x_{k+1} = Ax_k + Bu_k + \omega_k$, as the one considered above, is defined by an expected value quadratic cost function: 
\vspace*{-.2cm}
\begin{align*}
\min_{\bracks*{u_k}_{k=0}^{T-1}} \mathbb{E}^\mu_{x_0}(\sum_{k=0}^{T-1}x_k^\top Q x_k + u_k^\top R u_k) + x_T^\top Q x_T.
\end{align*} 
Above, $Q\in \m{S}^n_+$ and $R \in \m{S}^m_{++}$, where $\m{S}^n_+$ and $\m{S}^m_{++}$ denote the set of $n\times n$ positive semidefinite and $m\times m$ positive definite matrices, respectively. We choose $Q$ and $R$ to correspond to the largest ellipsoids  inscribed in $K$ and $\Us$, respectively. Through this choice the level sets of the LQG cost function proportionally correspond to the size of the target and control constraint sets. Intuitively, the penalization of states through the quadratic cost $Q$ drives the state to the origin. The penalization of the input does not guarantee feasibility of the input constraints. Therefore, we project the LQG control on the feasible set $\Us$. Using the same initial conditions and noise trajectories as those used with the ADP controller above, we simulated the performance of the LQG controller.  We counted the number of trajectories that reach $K$ without leaving $K^\prime$ over the horizon of $T=5$ steps. 

Figure \ref{fig:tiger} shows the mean over the initial conditions of the absolute difference between $V_\text{LQG}$ and $V_\text{ADP}$ as a function of number of basis functions. We observe a trend of increasing accuracy with increasing number of basis functions. Figure \ref{fig:gull} shows the same metric but as a function of the total number of sample pairs from $\XUs$ for a fixed number of basis functions. Changing the  number of samples $N$, affects the violation level $\varepsilon_k$ (assuming constant $\beta_k$) and the approximation quality  seems to improve with increasing $N$. In Table \ref{tab:para_tableLQR_samp},  we observe a trade-off between accuracy and computational time for the 6D problem varying the number of samples; the result is analogous in the 4D and 8D problems.

\begin{center}
\begin{table}[!h]
\centering
\begin{tabular}{lllll}
N &$\|{V}_\text{ADP}-V_{\text{LQG}}\|$&Construction (sec)&LP solution (sec)& Memory (MB)\\
\hline
400 &0.283&2.20 & 3.57&1.60\\
4000 &0.206&17.0 & 97.0&16.0\\
40000 &0.036&170 &162&160\\
\hline
\end{tabular}
\caption{Accuracy and computation time as a function of number of sampled points in $\dim\parens*{\XUs}=6$, with $M_k=500$ and $1-\beta_k=0.99$.}
\label{tab:para_tableLQR_samp}
\end{table}
\end{center}

\begin{figure}[!t]
\begin{minipage}[t]{0.4\linewidth}
\centering
\vspace{-3pt}
\includegraphics[width=6.3cm, height=4.3cm]{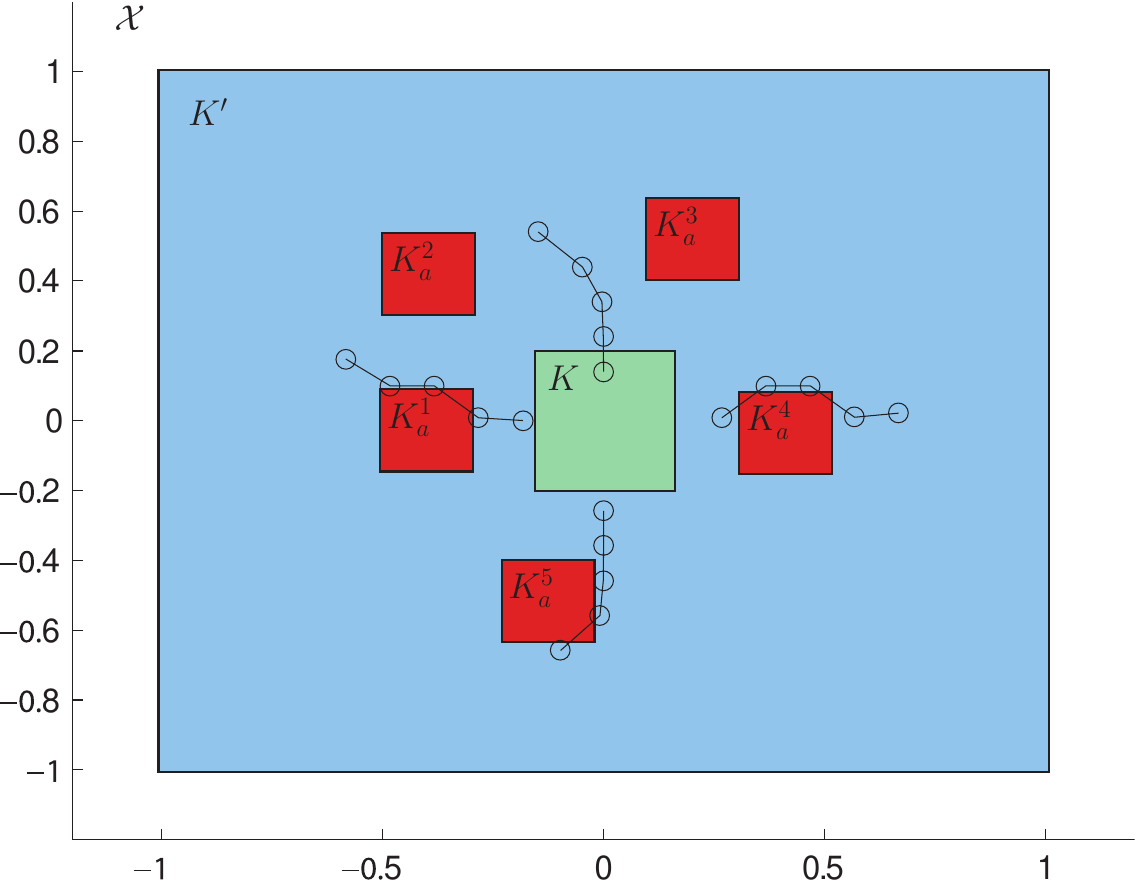}%
\caption{Example 2 - 2D depiction of obstacles and sample trajectories.}%
\label{fig:MIQPfig}%
\end{minipage}\hfill
\begin{minipage}[t]{0.5\linewidth}
\centering
\vspace{-3pt}
\begin{tabular}{llll}
\textbf{$\dim\parens*{\XUs}$}&4D&6D&8D \\
\hline
$M_k$ &100 & 500 & 1000 \\
$N_k$ & 4184 &  20184 & 40184\\
$\varepsilon_k$& 0.05 &0.05& 0.05 \\
$1-\beta_k$& 0.99 &0.99& 0.99 \\
$\|\tilde{V}_0-V_{\text{ADP}}\|$ & 0.095 &  0.118&    0.191\\
Construction (sec) &4.20&130&671\\
LP solution  (sec) &3.2&80&700\\
Memory (MB) & 3.20& 80.0&320\\
\hline
\end{tabular}
\captionof{table}{Parameters and properties of the value function approximation scheme.}
\label{tab:compareMIQP}
\end{minipage}
\vspace*{-.5cm}
\end{figure}

\subsection{\textbf{Example 2}}
\label{sec:origin_constr}

We consider the same linear dynamical system $x_{k+1} = x_k + u_k + \omega_k$,  with target set $K$ as defined in Section \ref{sec:origin_lqg}. In addition, in this example,  the avoid set includes  obstacles placed randomly within the state space as depicted in Figure \ref{fig:MIQPfig}. The safe set is $(K^\prime \setminus \bigcup_{j=1}^5 K^j_\alpha)$, where $K^\prime$ was defined in the previous example, and each $K^j_\alpha$ denotes a hyper-rectangular obstacle. We denote the union of obstacle sets by $K_\alpha=\bigcup_{j=1}^5 K^j_\alpha$. The reach-avoid time horizon is $T=7$. We use Algorithm \ref{algo:vfalgo} to approximate the optimal reach-avoid value function and compute the greedy policy. 

We chose the same basis function numbers, basis parameters, sampling and state-relevance measures as well as violation and confidence levels as in Section \ref{sec:origin_lqg}, shown in Table \ref{tab:compareMIQP}. We simulated the performance of the ADP controller starting from 100 different initial conditions, selected such that at least one obstacle blocks the direct path to the origin. For every initial condition we sampled 100 different noise trajectory realizations and applied the corresponding control policies computed through \eqref{eq:approx_pol}. We then computed the empirical ADP reach-avoid success probability (denoted by $V_\text{ADP}$) by counting the total number of trajectories that reach $K$ while avoiding reaching  the obstacles or leaving $K^\prime$.

Note that due to the presence of multiple obstacles, the LQG approach cannot be used as a heuristic for comparison. Nevertheless, the problem of reaching a target set without passing through any obstacles is an instance of a path planning problem and has been studied thoroughly for deterministic systems (see for example, \cite{borenstein1991vector,richards2002aircraft,van2006anytime}). For a benchmark comparison we use the approach of \cite{richards2002aircraft} and formulate the reach-avoid problem for the noise-free system as a constrained mixed logic dynamical system (MLD) \cite{bemporad1999control}. This problem can in turn be recast as a mixed integer quadratic program (MiQP) and solved to optimality using standard branch and bound techniques. To account for noise in the dynamics $\omega_k$, we used a heuristic approach as follows. We truncated the density function of the random variables $\omega_k$ at $95\%$ of their total mass and enlarged each obstacle set $K_\alpha$ by the maximum value of the truncated $\omega_k$ in each dimension. This resembles the robust (worst-case) approach to control design. 

Starting from the same initial conditions as in the ADP approach, we simulated the performance of the MiQP-based control policy on the 100 trajectory realizations used in the ADP controller. We implemented the policy in receding horizon by measuring the state at each horizon step. The empirical success probability of trajectories that reach $K$ while staying safe is denoted by $V_\text{MiQP}$.  The mean difference $\|V_\text{ADP}-{V}_\text{MiQP}\|$ is presented in Table \ref{tab:para_tableMPC} and is computed by averaging the corresponding empirical reach-avoid success probabilities over the initial conditions. As seen in this table, as the number of basis functions increases, $\|V_\text{ADP}-{V}_\text{MiQP}\|$ decreases. This can indicate that the reach-avoid value function approximation is increasing in accuracy. Note that for an increase in the planning horizon $T$, the number of binary variables (and hence the computational complexity) in MiQP grows exponentially, whereas in the LP-based reach-avoid approach, the computation effort grows linearly with the horizon. 
\begin{center}
\begin{table}
\centering
\begin{tabular}{lllll}
$M_k$&$\|{V}_\text{ADP}-V_{\text{MiQP}}\|$& Construction (sec)&LP solution (sec)& Memory (MB)\\
\hline
50 & 0.214& 1.67&0.18 & 0.784\\
100 &0.168 &5.59 &2.66&3.20\\
200 & 0.084& 22.0 &4.30&12.8\\
500 &  0.070 &130 &80.0 &80.0\\
1000 & 0.045&507 &1210 &320\\
\hline
\end{tabular}
\captionof{table}{Example 2 - Accuracy and computational requirements for $\dim\parens*{\XUs}=6$.}
\label{tab:para_tableMPC}
\end{table}
\end{center}

\vspace*{-.5cm}
\subsection{\textbf{Example 3}}
\label{sec:race_car}

\begin{figure}[b]
\begin{minipage}[t]{0.45\linewidth}
\centering
\vspace{0pt}
\includegraphics[width=7cm, height=5cm]{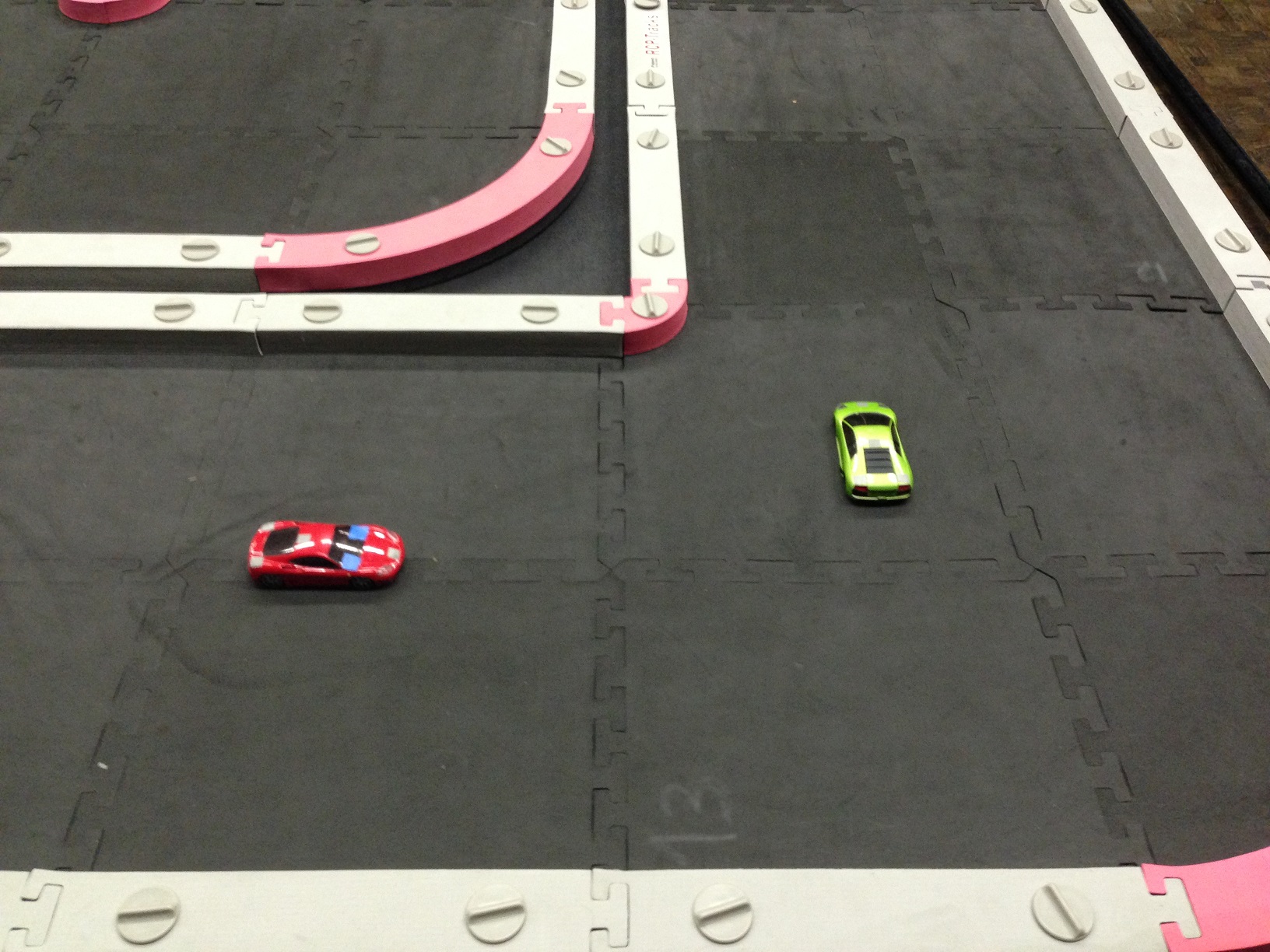}%
\caption{Example 3 - The set up of the Race-car cornering problem.}%
\label{fig:car_setup_pic}%
\end{minipage}\hfill
\begin{minipage}[t]{0.45\linewidth}
\centering
\vspace{0pt}
\includegraphics[width=7cm, height=5cm]{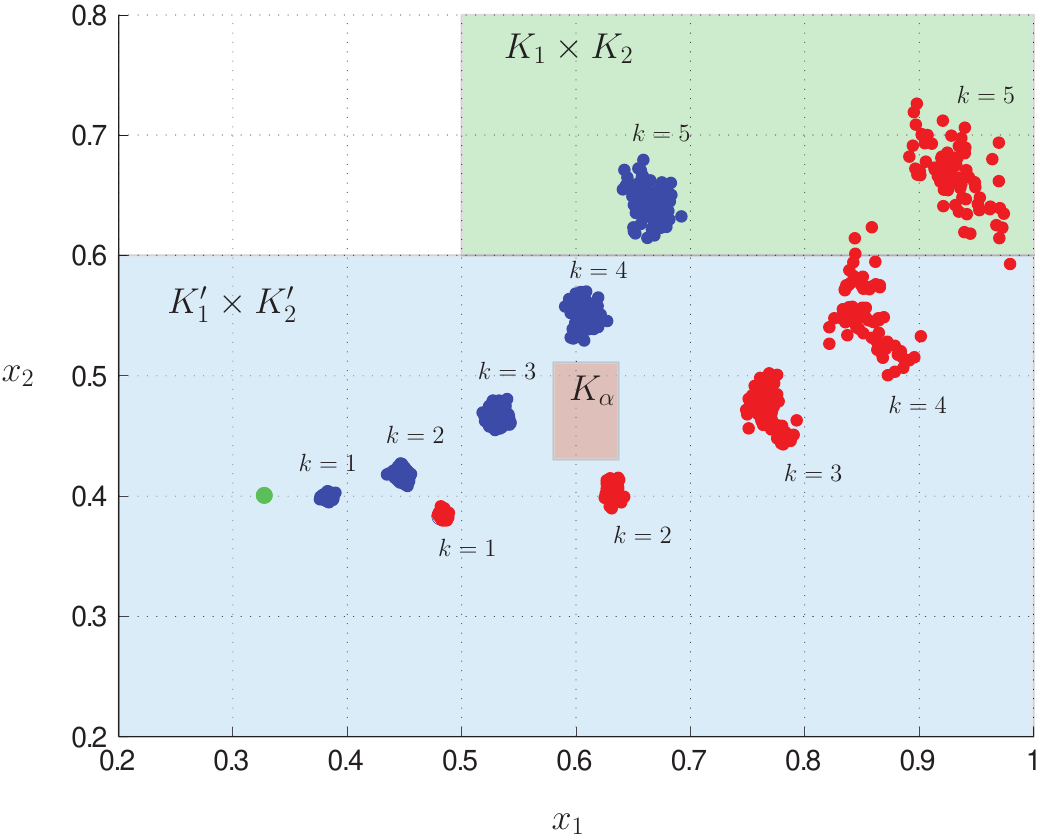}%
\caption{Example 3 - sample point clusters based on reach-avoid computation.}%
\label{fig:car_setup}%
\end{minipage}
\end{figure}
Consider the problem of driving a race car through a tight corner in the presence of static obstacles, illustrated in Figure \ref{fig:car_setup_pic}. As part of the ORCA project of the Automatic Control Lab (see \url{http://control.ee.ethz.ch/~racing/}), a six state variable nonlinear model with two control inputs has been identified to describe the movement of 1:43 scale race cars. The model derivation is discussed in \cite{liniger2014optimization} and is based on a unicycle approximation with parameters identified on the experimental platform of the ORCA project using model cars manufactured by Kyosho. We denote the state space by $\Xs\subset \mathbb{R}^6$, the control space by $\Us\subset\mathbb{R}^2$ and the identified dynamics by a function $f:\Xs\times\Us\mapsto \Xs$. The first two elements of each state $x\in\Xs$ correspond to spatial dimensions, the third to orientation, the fourth and fifth to body fixed longitudinal and lateral velocities and the sixth to angular velocity. The two control inputs $u\in\Us$ are the throttle duty cycle and the steering angle.

We will show how one can address the problem as a finite horizon reach-avoid problem and approximate its solution using the methodology presented. There are naturally several other approaches to address this problem  \cite{richards2002aircraft,couetoux2011continuous}. Our choice is only to illustrate the applicability of the framework for a general nonlinear dynamical system in high dimension. 

As  typically observed in practice, the state predicted by the identified dynamics and the state measurements recorded on the experimental platform are different due to process and measurement noise. Analyzing the deviation between predictions and measurements, we captured the uncertainties in the original model using additive Gaussian noise, $
g(x,u)=f(x,u)+\omega, \quad \omega\sim \mathcal{N}\parens{\mu,\Sigma}$, $\mu \in \mathbb{R}^6, \Sigma \in \m{S}_{++}^6$, where $\m{S}_{++}^6$ denotes the set of positive-definite matrices of dimension 6. The noise mean $\mu$, and diagonal covariance matrix $\Sigma$ have been selected such that the probability density function of the Markov decision process describing the uncertain dynamics resembles the empirical data obtained via measurements. As an example, Figure \ref{fig:noisefit} illustrates the fit for the angular velocity where $\mu_6=-0.26$ and $\Sigma(6,6)=0.53$. It follows that the kernel of the stochastic process is a GRBF with a single basis function described by the Gaussian distribution $\mathcal{N}\parens*{f(x,u)+\mu, \Sigma}$.

\begin{figure}[t]
\begin{minipage}[t]{0.45\linewidth}
\centering
\vspace{0pt}
\includegraphics[width=6cm, height=4cm]{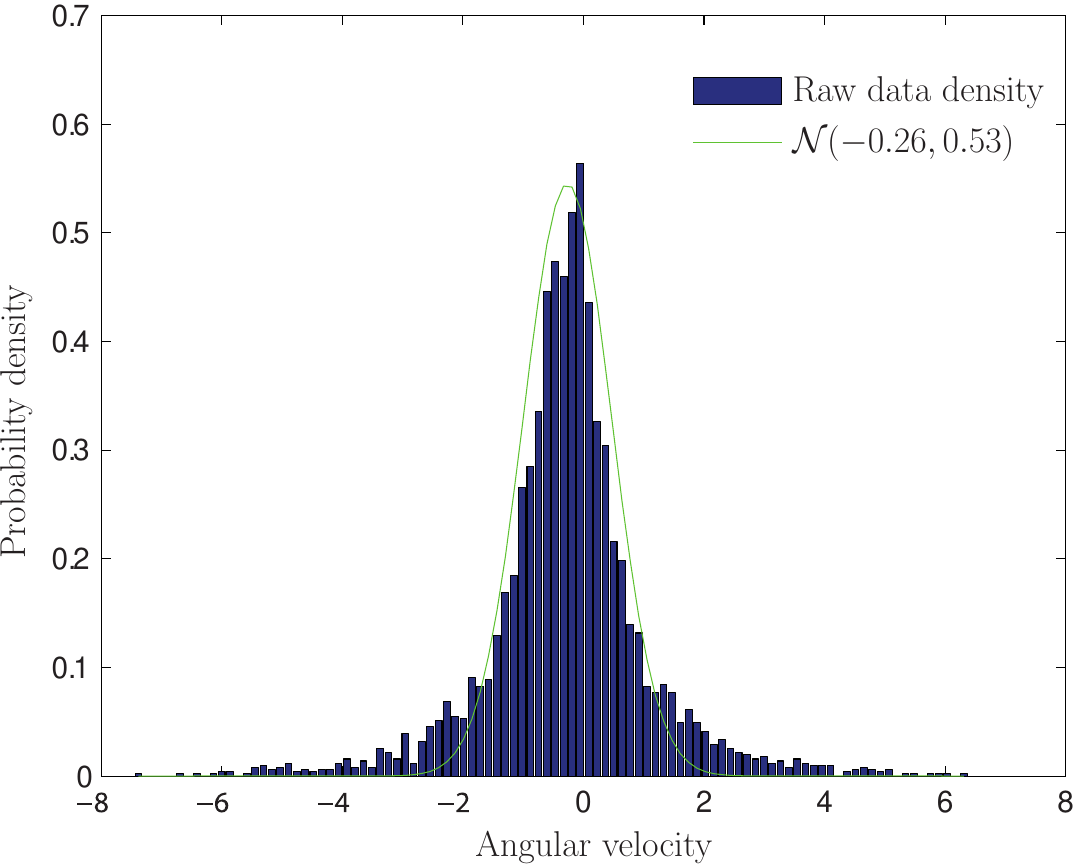}%
\caption{Empirical noise distribution.}%
\label{fig:noisefit}%
\end{minipage}\hfill
\begin{minipage}[t]{0.5\linewidth}
\centering
\vspace{0pt}
\begin{tabular}{l|rr|l}
{Safe region}&min&max&  variances\\
\hline
$K^\prime_1$ (m) & 0.2 & 1&[$8\times10^{-4}$,$1.2\times 10^{-3}$]\\
$K^\prime_2$ (m) & 0.2 & 0.6&[$8\times10^{-4}$,$1.2\times 10^{-3}$]\\
$K^\prime_3$ (rad)& $-\pi$ & $\pi$&[$5\times10^{-3}$,$1.5\times 10^{-2}$]\\
$K^\prime_4$ (m/s)& 0.3 & 3.5&[$5\times10^{-3}$,$1.5\times 10^{-2}$]\\
$K^\prime_5$ (m/s) & -1.5 & 1.5&[$5\times10^{-3}$,$1.5\times 10^{-2}$]\\
$K^\prime_6$ (rad/s) & -8 & 8&[2.00,4.00]\\
\hline
\end{tabular}
\captionof{table}{State constraints and basis functions' variances used in ADP approximation.}
\label{tab:state_limits}
\end{minipage}
\end{figure}
We cast the problem of driving the race car through a tight corner without reaching obstacles as a stochastic reach-avoid problem. Despite the highly nonlinear dynamics, the stochastic reach-avoid set-up can readily be applied to this problem. 

We consider a horizon of $T=6$ and a sampling time of $0.08$ seconds. The safe region of the spatial dimensions is defined as $\parens*{K^\prime_1\times K^\prime_2 }\setminus K_\alpha$ where $K_\alpha\subset \mathbb{R}^2$ denotes the obstacle, see Figures \ref{fig:car_setup_pic} and \ref{fig:car_setup}. The safe set in 6D is thus defined as $K^\prime=\parens*{\parens*{K^\prime_{1}\times K^\prime_{2}}\setminus K_\alpha} \times K^\prime_3\times K^\prime_4\times K^\prime_5\times K^\prime_6$ where $K^\prime_3,K^\prime_4,K^\prime_5,K^\prime_6$ describe the physical limitations of the model car (see Table \ref{tab:state_limits}). Similarly, the target set for the spatial dimensions is denoted by $K_1\times K_2$ and corresponds to the end of the turn as shown in Figure \ref{fig:car_setup}. The target set in 6D is then defined as  $K=K_1\times K_2\times K^\prime_3\times K^\prime_4\times K^\prime_5\times K^\prime_6$, which contains all states $x\in K^\prime$ for which $(x_1,x_2) \in K_1\times K_2$. The constraint sets are naturally decoupled over the state dimensions. Note that for practical purposes we have violated the assumption in Section \ref{sec:stochreach_DP} that the target set is a subset of the safe set in the spatial dimension (see Figure \ref{fig:car_setup}). The methodology and results remain the same if one extends the spatial safe set $K^\prime_1\times K^\prime_2$ to include $K_1\times K_2$.

We used  2000 GRBFs for each approximation step with centers and variances sampled according to uniform measures supported on $\bar{\Xs}$ and on the hyper-rectangle defined by the product of intervals in the rows of Table \ref{tab:state_limits}, respectively. We used a uniform state-relevance measure and a uniform sampling measure to construct each one of the finite linear programs in Algorithm \ref{algo:vfalgo}. All violation and confidence levels were chosen to be $\eps_k=0.2$ and $1-\beta_k=0.99$ respectively for $k=\{0,\dots,5\}$. We then implemented the steps of Algorithm \ref{algo:vfalgo} and compute a sequence of approximate value functions. 

To evaluate the quality of the approximations we initialized the car at two different initial conditions $
x^1=(0.33,0.4,-0.2,0.5,0,0)$ and $x^2=(0.33,0.4,-0.2,2,0,0)$. They 
correspond to entering the corner at low ($x^1_4=0.5 \text{ m/s}$) and high ($x^2_4=2\text{ m/s}$) longitudinal velocities. The approximate value functions evaluate to $\tilde{V}_0(x^1)=0.98$, $\tilde{V}_0(x^2)=1$ and indicate success with high probabilities for both cases. Interestingly,  the associated trajectories computed via the greedy policy defined through \eqref{eq:approx_pol} vary significantly. In the low velocity case, the car avoids the obstacle by driving above it while in the high velocity case, it does so by driving below it (see Figure \ref{fig:car_setup}). Such a behavior is expected since the car can slip if it turns aggressively at high velocities. We also computed empirical reach-avoid probabilities in simulation by sampling 100 noise trajectories from each initial state and implementing the ADP control policy using the associated value function approximation. The sample trajectories are plotted in Figure \ref{fig:car_setup} and the values were found to be $V_{\text{ADP}}(x^1)=1$ and $V_{\text{ADP}}(x^2)=0.99$

The controller was tested on the ORCA setup by running $10$ experiments from each initial condition. We pre-computed the control inputs at the predicted mean trajectory of the states over the horizon for each experiment. Implementing the feedback policy online would require solving problem \eqref{eq:approx_pol} within the sampling time of $0.08$ seconds. In theory, this computation is possible since the control space is only two dimensional but it requires developing an embedded nonlinear programming solver compatible with the ORCA setup. Here, we have implemented the open loop controller. We note however that if the open loop controller performs accurately, the closed loop computation can only improve the performance by utilizing updated state measurements. As demonstrated by the videos in \href{https://www.youtube.com/user/ETHZurichIfA}{(youtube:ETHZurichIfA)}, the car is successfully driving through the corner even when the control inputs are applied in an open loop. 


\section{Conclusions}
We developed a numerical approach to compute the value function of the stochastic reach-avoid problem for Markov decision processes with continuous state and action spaces. Since the method relies on solving linear programs we were able to tackle reach-avoid problems with larger dimensions than established state space gridding methods. The potential of the approach was analyzed through two benchmark case studies and a trajectory planning problem for a six dimensional nonlinear system with two  inputs. To the best of our knowledge, this is the first time that stochastic reach-avoid problems up to eight continuous state and input dimensions have been addressed. 

We are currently focusing on the problem of systematically choosing the  basis function parameters by exploiting knowledge about the system dynamics. Furthermore, we are developing decomposition methods for the large linear programs that arise in our approximation scheme to allow addressing control of MDPs with higher dimensions. We are also addressing tractable reformulations of the infinite constraints in the semi-infinite linear programs for stochastic reach-avoid problems to avoid sampling-based methods. Finally, given the close connections between reinforcement learning and approximate dynamic programming, our results may be useful in developing model-based reinforcement learning algorithms that incorporate safety considerations in addition optimizing a reward or cost function.

\section{Acknowledgements} 

The authors would like to thank Alexander Liniger from the Automatic Control Laboratory in ETH Z\"urich for his help in testing the algorithm on the ORCA platform

\small
\bibliography{ifaadp}
\bibliographystyle{apa}

\end{document}